\newtheorem{theorem}{Theorem}
\newtheorem{lemma}{Lemma}
\newtheorem{corollary}[theorem]{Corollary}
\newtheorem{proposition}{Proposition}
\newtheorem*{definition}{Definition}
\renewcommand*\backref[1]{}
\renewcommand*\backrefalt[4]{ \ifcase #1 \or (cited on page #2) \else (cited on pages #2) \fi}
\newcommand{\be}{\begin{equation}}
\newcommand{\ee}{\end{equation}}
\newcommand{\bea}{\begin{eqnarray}}
\newcommand{\eea}{\end{eqnarray}}
\newcommand{\vs}{\vspace{0.5cm}}
\def\XXint#1#2#3{{\setbox0=\hbox{$#1{#2#3}{\int}$ }
\vcenter{\hbox{$#2#3$ }}\kern-.6\wd0}}
\begin{document}

\title[torsion-critical manifolds]{On a variational theorem of Gauduchon and torsion-critical manifolds}

\author{Dongmei Zhang}
\address{Dongmei Zhang. School of Mathematical Sciences, Chongqing Normal University, Chongqing 401331, China}
\email{{2250825921@qq.com}}\thanks{Zheng is partially supported by National Natural Science Foundations of China
with the grant No.12071050 and  12141101, Chongqing grant cstc2021ycjh-bgzxm0139, and is supported by the 111 Project D21024.}

\author{Fangyang Zheng}
\address{Fangyang Zheng. School of Mathematical Sciences, Chongqing Normal University, Chongqing 401331, China}
\email{20190045@cqnu.edu.cn; franciszheng@yahoo.com} \thanks{}

\subjclass[2010]{53C55 (primary), 53C05 (secondary)}
\keywords{Hermitian manifold; Chern connection; variation; Gauduchon's torison $1$-form; torsion-critical manifolds}

\begin{abstract}
In 1984, Gauduchon considered the functional of $L^2$-norm of his torsion $1$-form on a compact Hermitian manifold. He obtained in \cite{Gau84} the Euler-Lagrange equation for this functional, and showed that in dimension $2$ the critical metrics must be balanced (namely with vanishing torsion $1$-form). In this note we extend his result to higher dimensions, and show that critical metrics are balanced in all dimensions. We also consider the $L^2$-norm of the full Chern torsion, and show by examples that there are critical points of this functional that are not K\"ahler.
\end{abstract}

\maketitle

\tableofcontents

\markleft{Dongmei Zhang and Fangyang Zheng}
\markright{Goldberg manifolds}

\section{Introduction and statement of results}\label{intro}

Let $(M^n,g)$ be a compact Hermitian manifold of complex dimension $n$. Denote by $\omega$ its K\"ahler form. {\em Gauduchon's torsion $1$-form,} which is the trace of the torsion tensor $T$ of the Chern connection $\nabla$, is the global $(1,0)$-form $\eta$ on $M^n$ defined by $\partial (\omega^{n-1}) = -\eta \wedge \omega^{n-1}$. In the celebrated work \cite{Gau84}, Gauduchon considered the functional
\begin{equation*}
{\mathcal G} (g) = V^{\frac{1-n}{n}} \int_M |\eta|^2 dv, \ \ \ \ \ \ g\in {\mathcal H}_M,
\end{equation*}
where ${\mathcal H}_M$ is the set of all Hermitian metrics on the complex manifold $M^n$, $dv=\frac{\omega^n}{n!}$ the volume form, and $V=\int_Mdv$ the volume. Note that the factor in front of the integral is simply to make the functional independent of the scaling of metric (by constant multiples). Equivalently, one can drop this $V$ factor but restrict to the subset of all Hermitian metrics with unit volume. He obtained  the Euler-Lagrange equation of ${\mathcal G}$:
\begin{equation} \label{eq:Gau-crit}
\sqrt{-1} (\partial \overline{\eta} - \overline{\partial} \eta - \eta \wedge \overline{\eta}) = a\, \omega
\end{equation}
where $a \geq 0$ is a constant. This is formula (48bis) of \cite{Gau84}, where it was written in terms of the Lee form $\theta$, which is just $-(\eta +\overline{\eta})$. Here we rewrote it in the equivalent but more convenient complex form. He also proved \cite[Theorem III.4]{Gau84} that, when $n=2$, any critical point of ${\mathcal G}$ must be balanced, namely, with $\eta=0$.

In this note, we extend his theorem to general dimensions:

\begin{proposition} \label{prop1}
If a compact Hermitian manifold $(M^n,g)$ satisfies equation (\ref{eq:Gau-crit}), then $\eta=0$. That is, the only critical points of the Gauduchon functional $\,{\mathcal G}$  are balanced metrics.
\end{proposition}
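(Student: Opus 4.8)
The plan is to extract two scalar integral identities from the $(1,1)$-form equation (\ref{eq:Gau-crit}) and to pit their signs against the hypothesis $a\ge 0$. Throughout I would abbreviate the equation as $\Phi = a\,\omega + T$, where $\Phi := \sqrt{-1}(\partial\overline{\eta}-\overline{\partial}\eta)$ is a \emph{real} $(1,1)$-form and $T := \sqrt{-1}\,\eta\wedge\overline{\eta}$ is the rank-one nonnegative $(1,1)$-form with $\mathrm{tr}_\omega T = |\eta|^2$. Since $a\ge 0$ and $T\ge 0$, the form $\Phi$ is a nonnegative $(1,1)$-form whose eigenvalues relative to $\omega$ are $a$ (with multiplicity $n-1$) and $a+|\eta|^2$.

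\emph{First identity.} I would wedge the equation with $\omega^{n-1}$ and integrate, integrating by parts using the defining relations $\partial(\omega^{n-1}) = -\eta\wedge\omega^{n-1}$ and $\overline{\partial}(\omega^{n-1}) = -\overline{\eta}\wedge\omega^{n-1}$. The boundary terms vanish for bidegree reasons (e.g. $\overline{\eta}\wedge\omega^{n-1}$ is of type $(n-1,n)$, so its $\overline{\partial}$ drops out), and the three contributions collapse to $\int_M \sqrt{-1}\,\eta\wedge\overline{\eta}\wedge\omega^{n-1} = \tfrac{1}{n}\int_M|\eta|^2\,\omega^n$. Comparing with $a\int_M\omega^n$ gives
\[
\int_M |\eta|^2\,dv = a\,n\,V .
\]
In particular, once $a=0$ is established, this forces $\eta\equiv 0$.

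\emph{Key identity.} I would introduce the global real $1$-form $\psi := \sqrt{-1}(\overline{\eta}-\eta)$. A direct computation gives $d\psi = \Phi + A + \overline{A}$ with $A := -\sqrt{-1}\,\partial\eta$ of type $(2,0)$, so that the $(1,1)$-part of $d\psi$ is exactly $\Phi$. Since $\psi$ is globally defined, $(d\psi)^n = d\big(\psi\wedge(d\psi)^{n-1}\big)$ is exact, hence $\int_M(d\psi)^n = 0$. As $A,\overline{A},\Phi$ are even-degree forms they commute, and in top degree only bidegree-balanced monomials survive, so
\[
0 = \int_M (d\psi)^n = \sum_{k=0}^{\lfloor n/2\rfloor}\frac{n!}{k!\,k!\,(n-2k)!}\int_M \Phi^{\,n-2k}\wedge A^{k}\wedge\overline{A}^{\,k}.
\]
Here $A^{k}\wedge\overline{A}^{\,k} = (\partial\eta)^k\wedge\overline{(\partial\eta)^k}$ is a nonnegative $(2k,2k)$-form, and $\Phi^{\,n-2k}$ is a nonnegative $(n-2k,n-2k)$-form since $\Phi\ge 0$; thus every summand is a nonnegative top form carrying a positive coefficient. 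A sum of nonnegative quantities that vanishes forces each to vanish, and the $k=0$ term gives $\int_M\Phi^n = 0$. From the eigenvalue description, $\Phi^n = a^{n-1}(a+|\eta|^2)\,\omega^n$, a nonnegative multiple of $\omega^n$ because $a\ge 0$; hence $a^{n-1}(a+|\eta|^2)\equiv 0$. Since $a$ is a constant, $a>0$ would make the integrand $\ge a^n>0$, a contradiction, so $a=0$. Substituting $a=0$ into the first identity yields $\int_M|\eta|^2\,dv=0$, i.e. $\eta\equiv 0$.

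The step I expect to be the main obstacle is the positivity bookkeeping in the key identity: one must check both that only the balanced monomials $\Phi^{\,n-2k}\wedge A^{k}\wedge\overline{A}^{\,k}$ contribute in top degree and that each is a genuinely nonnegative $(n,n)$-form. The latter rests on the standard facts that $\gamma\wedge\overline{\gamma}$ is a positive $(p,p)$-form for any $(p,0)$-form $\gamma$, and that wedge products of nonnegative $(1,1)$-forms with such positive forms stay nonnegative; diagonalizing $\Phi$ pointwise in a unitary coframe makes this explicit. Everything else reduces to the two routine integrations by parts above.
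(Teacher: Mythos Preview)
Your argument is correct and is essentially the same as the paper's: both introduce $\psi = \sqrt{-1}(\overline{\eta}-\eta)$, compute $d\psi = \Phi + A + \overline{A}$, and exploit the exactness of $(d\psi)^n$ together with the nonnegativity of each surviving term to force $a=0$. The only cosmetic differences are that the paper argues by contradiction (assuming $a>0$ so that $\Phi=\omega_0$ is a genuine metric and $\int(d\psi)^n\geq\int\omega_0^n>0$), whereas you keep $a\geq 0$ and read off $\int\Phi^n=0$ from the vanishing sum; and the paper cites the value $a=\frac{1}{nV}\int|\eta|^2dv$ from the derivation of the Euler--Lagrange equation rather than re-deriving it as your ``first identity.''
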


Next, we mimic Gauduchon's functional and consider the $L^2$-norm of the Chern torsion tensor $T$:
\begin{equation} \label{eq:F}
{\mathcal F} (g) = V^{\frac{1-n}{n}} \int_M |T|^2 dv, \ \ \ \ \ \ g\in {\mathcal H}_M.
\end{equation}
Here $T$ is defined by $\,T(x,y)=\nabla_xy-\nabla_yx-[x,y]\,$ for any vector fields $x$, $y$ on $M^n$. For convenience, let us introduce the following terminology
\begin{definition}
A compact Hermitian manifold $(M^n,g)$ is said to be {\bf torsion-critical}, if $g$ is a critical point of ${\mathcal F}$.
\end{definition}

Since $T=0$ if and only if $g$ is K\"ahler, we see that all K\"ahler metrics are torsion-critical, and they are the absolute minimum of the functional ${\mathcal F}$. So the point is to understand torsion-critical metrics that are not K\"ahler. It turns out that unlike the case of Gauduchon functional, there do exist non-K\"ahler metrics that are torsion-critical for each $n\geq 3$.

In order the describe the equations obeyed by torsion-critical metrics, we need to fix some notations first. Given a Hermitian manifold $(M^n,g)$, let $\{ e_1, \ldots , e_n\}$ be a local unitary frame of type $(1,0)$  tangent vectors,  and let $\{ \varphi_1, \ldots , \varphi_n\}$ be the unitary coframe of $(1,0)$-forms dual to $e$. Under the frame $e$, the components of $T$ are:
$$ T(e_i, \overline{e}_k)=0, \ \ \ \ T(e_i, e_k) = \sum_r T^r_{ik} e_r , \ \ \ \  \ \ \ \ \ \ 1\leq i,k\leq n.$$
We have $\eta = \sum_i \eta_i \varphi_i$, $\ \eta_i=\sum_r T^r_{ri}$. So $\eta$ is the trace of $T$. Under any unitary local frame $e$ and for any $1\leq i,j\leq n$, let us denote by
\begin{equation}
A_{i\bar{j}} = \sum_{r,s}T^r_{is}\overline{T^r_{js}}, \ \ \ B_{i\bar{j}} = \sum_{r,s}T^j_{rs}\overline{T^i_{rs}}, \ \ \ \phi_{i}^j = \sum_r T^j_{ir} \overline{\eta}_r, \ \ \ \ \xi_{i}^j = \sum_r T^j_{ir,\,\bar{r}}
\end{equation}
where the index after comma stands for covariant derivative with respect to Chern connection. Define
\begin{eqnarray*}
&& \sigma_1= \sqrt{-1}\sum_{i,j} A_{i\bar{j}} \varphi_i \wedge \overline{\varphi}_j, \ \ \ \ \ \ \sigma_2= \sqrt{-1}\sum_{i,j} B_{i\bar{j}} \varphi_i \wedge \overline{\varphi}_j, \\
&& \phi = \sqrt{-1}\sum_{i,j} \phi_{i}^j \varphi_i \wedge \overline{\varphi}_j, \ \ \ \ \ \ \ \ \xi = \sqrt{-1}\sum_{i,j} \xi_{i}^j \varphi_i \wedge \overline{\varphi}_j.
\end{eqnarray*}
Clearly, $\sigma_1$, $\sigma_2$, $\phi$, $\xi$ are independent of the choice of local unitary frames, hence are globally defined $(1,1)$-forms on $M^n$, with $\sigma_1\geq 0$, $\sigma_2\geq 0$. Let us denote by
$$ |T|^2:= \sum_{i,j,k} |T^j_{ik}|^2$$
and call it the {\em square norm} of the torsion tensor $T$. Then the trace of $\sigma_1$ or $\sigma_2$ with respect to the K\"ahler form $\omega = \sqrt{-1}\sum_{i} \varphi_i\wedge \overline{\varphi}_i$ is equal to $|T|^2$, so $\sigma_1=0$ (or $\sigma_2=0$) if and only if $g$ is K\"ahler. We have the following

\begin{proposition}  \label{prop2}
A compact Hermitian manifold $(M^n,g)$ is torsion-critical if and only if
\begin{equation} \label{eq:tor-crit}
2\sigma_1 - \sigma_2 +2(\phi + \bar{\phi}) - 2 (\xi + \bar{\xi}) = (|T|^2-\frac{n-1}{n}b)\, \omega, \ \ \ \ \ \ b=V^{-1}\int_M|T|^2dv.
\end{equation}
That is, $g$ is a critical point of the functional ${\mathcal F}$ if and only if it satisfies equation $(\ref{eq:tor-crit})$.
\end{proposition}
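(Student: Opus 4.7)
The plan is to compute the first variation of the functional $\mathcal{F}$ along a smooth path $\{g_t\}_{t\in(-\varepsilon,\varepsilon)}$ of Hermitian metrics with $g_0=g$, and to extract a pointwise Euler--Lagrange identity in the space of real $(1,1)$-forms. At $t=0$, write the variation of the fundamental form in a local unitary frame $\{e_i\}$ as $\dot\omega=\sqrt{-1}\sum_{i,j}\psi_{i\bar j}\,\varphi_i\wedge\overline\varphi_j$, where the matrix $[\psi_{i\bar j}]$ is Hermitian but otherwise arbitrary. Using $\dot{dv}=\mathrm{tr}_\omega(\psi)\,dv$ and $\dot V=\int_M\mathrm{tr}_\omega(\psi)\,dv$, the scale-invariant prefactor combines with the variation of the volume form to yield
\[
\tfrac{d}{dt}\big|_{0}\mathcal{F}(g_t)=V^{\frac{1-n}{n}}\!\int_M\!\Big(\dot{|T|^2}+\big(|T|^2-\tfrac{n-1}{n}b\big)\mathrm{tr}_\omega(\psi)\Big)\,dv.
\]
The second summand is already the pointwise pairing of $\psi$ against $(|T|^2-\tfrac{n-1}{n}b)\omega$, which is precisely the right-hand side of \eqref{eq:tor-crit}. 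What remains is to compute $\dot{|T|^2}$ and to rewrite it, after integration by parts, as the pairing of $\psi$ against a globally defined $(1,1)$-form matching the left-hand side of \eqref{eq:tor-crit} (up to the overall sign absorbed when moving terms to the opposite side of the final equation).

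For the main computation I would write $|T|^2=g_{k\bar l}\,g^{i\bar r}\,g^{j\bar s}\,T^k_{ij}\,\overline{T^l_{rs}}$ and vary each factor at a chosen point $p$, working in a unitary frame adapted to the Chern connection so that the connection $1$-forms vanish at $p$ and covariant and partial derivatives agree there. Variation of the three metric factors produces an algebraic contribution whose pointwise pairing with $\psi$ assembles (after using the antisymmetry of $T$ in its lower indices) into the $2\sigma_1-\sigma_2$ combination. Variation of the torsion coefficients themselves, derived from the Chern formula $T^k_{ij}=g^{k\bar l}(\partial_ig_{j\bar l}-\partial_jg_{i\bar l})$ in local holomorphic coordinates, contributes a further algebraic piece that merges with the previous ones, together with a derivative piece proportional to $(\partial_i\psi_{j\bar k}-\partial_j\psi_{i\bar k})\overline{T^k_{ij}}$ and its complex conjugate.

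The main obstacle is the integration-by-parts step. Because the Chern connection has torsion, transferring derivatives from $\psi$ onto $\overline T$ via Stokes' theorem does not yield a clean divergence: torsion-valued correction terms arise, which on closer inspection split into two families. The first collects into the divergences $\xi^j_i=\sum_r T^j_{ir,\bar r}$ and produces the $\xi+\overline\xi$ contribution, while the second contracts $T$ against the conjugate trace $\overline\eta$, producing the quantities $\phi^j_i=\sum_r T^j_{ir}\overline\eta_r$ and giving the $\phi+\overline\phi$ contribution. A convenient bookkeeping device is to recast the derivative integrals in exterior-algebra form --- writing the $\partial\psi$-type terms as integrals of $(n,n)$-forms built from $\psi$, torsion-valued $(1,1)$-forms, and $\omega^{n-2}$ --- and then to exploit Gauduchon's identity $\partial(\omega^{n-1})=-\eta\wedge\omega^{n-1}$ to convert $\partial\psi$ contributions into $\eta$-contractions without tracking individual Christoffel corrections. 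Once the coefficients $2,-1,2,-2$ in front of $\sigma_1,\sigma_2,\phi+\overline\phi,\xi+\overline\xi$ are verified, the variational identity takes the form $\int_M\langle\psi,\Theta\rangle\,dv=0$ for every Hermitian $\psi$, where $\Theta$ is the difference of the two sides of \eqref{eq:tor-crit}; the arbitrariness of $\psi$ then forces $\Theta=0$, as claimed.
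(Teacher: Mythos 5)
Your proposal is correct and follows essentially the same route as the paper: a first variation $g_t=g+t\psi$, with the volume/prefactor terms giving the right-hand side, the variations of the metric factors in $|T|^2$ giving $2\sigma_1-\sigma_2$, and an integration by parts (via the Hermitian divergence theorem, whose $\eta$-correction produces $\phi+\bar\phi$ and whose genuine divergence produces $\xi+\bar\xi$) handling the derivative terms. The only streamlining the paper adds is the identity $\overset{\circ}{T^j_{ik}}=(h_{k\bar{\ell},\,i}-h_{i\bar{\ell},\,k})\,g^{\bar{\ell}j}$, which absorbs the algebraic piece from varying $g^{-1}$ inside $T$ into covariant derivatives of the variation, so that a single integration-by-parts lemma yields the coefficients $2,-1,2,-2$ without frame-dependent bookkeeping.
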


Note that when $n=2$, $\eta$ and $T$ carry the same amount of information, to be more precise one has $|T|^2=2|\eta|^2$ when $n=2$, so ${\mathcal F}$ and ${\mathcal G}$ are essentially the same. So by the aforementioned theorem of Gauduchon we know that

\begin{corollary}[\cite{Gau84}]
For $n=2$, any torsion-critical metric must be balanced, hence K\"ahler.
\end{corollary}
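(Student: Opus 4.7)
The plan is to reduce the corollary directly to Proposition~\ref{prop1} by checking that in complex dimension two the two functionals $\mathcal{F}$ and $\mathcal{G}$ agree up to a constant factor, so that their critical points coincide.

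First I would verify the pointwise identity $|T|^2=2|\eta|^2$ when $n=2$. Under a local unitary frame, the components $T^r_{ik}$ are antisymmetric in the lower indices $i,k$, so for $n=2$ the only independent entries are $T^1_{12}$ and $T^2_{12}$. A direct computation of the trace gives $\eta_1 = T^2_{21}=-T^2_{12}$ and $\eta_2=T^1_{12}$, so $|\eta|^2=|T^1_{12}|^2+|T^2_{12}|^2$, while $|T|^2=\sum_{i,j,k}|T^j_{ik}|^2=2(|T^1_{12}|^2+|T^2_{12}|^2)$. Hence $|T|^2=2|\eta|^2$ identically in dimension two.

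Next, substituting this identity into the definition $(\ref{eq:F})$ of $\mathcal{F}$ gives $\mathcal{F}(g)=2\,\mathcal{G}(g)$ for every $g\in\mathcal{H}_M$. Consequently, $g$ is torsion-critical precisely when $g$ is a critical point of $\mathcal{G}$. By Proposition~\ref{prop1}, every critical point of $\mathcal{G}$ satisfies $\eta=0$, i.e.\ $g$ is balanced in the sense that $\partial(\omega^{n-1})=0$.

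Finally, I would observe that in complex dimension two the balanced condition coincides with the Kähler condition: $\partial(\omega^{n-1})=\partial\omega=0$ together with its conjugate gives $d\omega=0$. Combining this with the previous step yields that a torsion-critical metric in dimension two is Kähler. There is no real obstacle here; the only substantive point is the pointwise identity $|T|^2=2|\eta|^2$, which is immediate from the antisymmetry of $T^j_{ik}$ once $n=2$ leaves a single independent antisymmetric pair of indices.
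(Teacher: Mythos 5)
Your proposal is correct and follows the same route as the paper: the paper likewise reduces the corollary to the identity $|T|^2=2|\eta|^2$ in dimension two (so that $\mathcal{F}=2\mathcal{G}$) and then invokes Gauduchon's theorem, i.e.\ Proposition~\ref{prop1}, together with the fact that balanced equals K\"ahler for $n=2$. Your explicit frame computation of $|T|^2=2|\eta|^2$ simply fills in a step the paper asserts without proof.
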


In general dimensions, the trace of $\phi$ with respect to $\omega$ is $|\eta|^2=\sum_r |\eta_r|^2$, while the trace of $\xi$ is the global function $\chi = \sum_r \eta_{r,\,\bar{r}}$. Since
$$ \sqrt{-1}\partial \overline{\partial} \omega^{n-1} = ( |\eta|^2-\chi ) \omega^n, $$
we know that $\chi$ is a real-valued function on $M^n$, and $|\eta|^2-\chi =0$ identically if and only if the metric is Gauduchon. By taking trace with respect to $\omega$ on both sides of (\ref{eq:tor-crit}), we get the following result which is due to Angella, Istrati, Otiman, and Tardini, \cite[Proposition 17]{AIOT}:

\begin{corollary}[\cite{AIOT}]
Denote by $[g]$ the conformal class of $g$, namely, the set of all Hermitian metrics on $M^n$ conformal to $g$. Then $g$ is a critical point of ${\mathcal F}|_{[g]}$ if and only if
\begin{equation}\label{eq:trace}
4\big(|\eta|^2-\chi \big) = (n-1) \big(|T|^2-b\big),
\end{equation}
where $b$ is the average value of $|T|^2$. In particular, the metric will be Gauduchon if and only if $|T|^2$ is a constant.
\end{corollary}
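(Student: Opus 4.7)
The plan is to derive this corollary directly from Proposition \ref{prop2} by taking $\mathrm{tr}_\omega$ of the Euler--Lagrange equation (\ref{eq:tor-crit}), after first observing that conformal variations within $[g]$ probe precisely the trace of the full variational tensor. A variation tangent to $[g]$ has the form $h_{i\bar j} = 2f\, g_{i\bar j}$ for a real function $f$ on $M^n$, and by the scaling invariance of $\mathcal F$ one may impose the normalisation $\int_M f\, dv = 0$. From the proof of Proposition \ref{prop2} the first variation of $\mathcal F$ is expressible as $\int_M \langle E, h\rangle\, dv$ for a Hermitian $(1,1)$-tensor $E$ whose vanishing encodes full criticality; since $\langle E, 2f g\rangle = 2f\,\mathrm{tr}_\omega E$, the metric $g$ is critical within $[g]$ exactly when $\mathrm{tr}_\omega E$ is a constant on $M^n$, with the value of the constant determined by the zero-mean constraint on $f$.

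Next I would carry out the trace computation term by term. Directly from the defining expressions of $\sigma_1, \sigma_2, \phi, \xi$, one verifies
\[
\mathrm{tr}_\omega \sigma_1 = \mathrm{tr}_\omega \sigma_2 = |T|^2, \qquad \mathrm{tr}_\omega \phi = |\eta|^2, \qquad \mathrm{tr}_\omega \xi = \chi,
\]
while $\mathrm{tr}_\omega \omega = n$. Applying $\mathrm{tr}_\omega$ to (\ref{eq:tor-crit}) then yields
\[
|T|^2 + 4(|\eta|^2 - \chi) = n|T|^2 - (n-1)b,
\]
which rearranges into (\ref{eq:trace}). To pin down the additive constant in the converse direction, I would integrate both sides over $M$ and invoke the pointwise identity $\sqrt{-1}\partial\bar\partial\omega^{n-1} = (|\eta|^2 - \chi)\omega^n$ together with Stokes' theorem; this gives $\int_M(|\eta|^2 - \chi)\,dv = 0$, forcing the constant to equal $-(n-1)b$ and recovering (\ref{eq:trace}) at every point.

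For the final assertion, recall that the Gauduchon condition is equivalent to $|\eta|^2 \equiv \chi$ pointwise, again by the $\partial\bar\partial$ identity above. If $g$ is Gauduchon, the left side of (\ref{eq:trace}) vanishes and hence $|T|^2 \equiv b$ is constant; conversely, if $|T|^2$ is constant then $|T|^2 \equiv b$ and (\ref{eq:trace}) immediately gives $|\eta|^2 - \chi \equiv 0$. The main bookkeeping challenge here is the trace computation — verifying that the coefficients $2,-1,2,-2$ in front of $\sigma_1, \sigma_2, \phi+\bar\phi, \xi+\bar\xi$ in (\ref{eq:tor-crit}) combine correctly with $\mathrm{tr}_\omega \omega = n$ to reproduce the right factor $n-1$ — while the conformal-to-trace reduction is essentially a formality once the variational formula underlying Proposition \ref{prop2} is in hand.
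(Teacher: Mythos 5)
Your proposal is correct and follows essentially the same route as the paper: the paper also obtains the corollary by taking the trace with respect to $\omega$ of the Euler--Lagrange equation (\ref{eq:tor-crit}), using $\tr_\omega\sigma_1=\tr_\omega\sigma_2=|T|^2$, $\tr_\omega\phi=|\eta|^2$, $\tr_\omega\xi=\chi$, and noting that the Euler--Lagrange equation of ${\mathcal F}|_{[g]}$ is precisely the trace of that of ${\mathcal F}$. Your additional justification that conformal variations $h=2fg$ probe exactly the trace of the variational tensor, and the integration argument fixing the additive constant via $\int_M(|\eta|^2-\chi)\,dv=0$, are correct elaborations of what the paper leaves implicit.
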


Note that the Euler-Lagrange equation of ${\mathcal F}|_{[g]}$ is just the trace of that of ${\mathcal F}$. In particular, all  torsion-critical metrics satisfy (\ref{eq:trace}). To see that equation (\ref{eq:trace}) is the same as (18) of \cite{AIOT}, we note that under any unitary coframe $\varphi$, it holds that $\partial \omega = \sqrt{-1}\sum_{i,j,k} T^j_{ik} \varphi_i\wedge\varphi_k \wedge\overline{\varphi}_j$. Hence $|d\omega|^2 =2|\partial \omega|^2 = |T|^2$, and our  functional ${\mathcal F}$ here is the same as the functional ${\mathcal A}$ studied in \cite{AIOT}.  For the Lee form $\theta = -(\eta+\overline{\eta})$, one has $d^{\ast}\theta = 2(\chi -|\eta|^2)$.

In the paper \cite{AIOT}, the authors studied the variational problem for a number of interesting geometric functionals, with a special emphasis on their restriction on conformal classes. The papers \cite{ACS} and \cite{ACS1} also give excellent discussions on related problems.

We are particularly interested in the set ${\mathcal H}^{tc}_n$ of $n$-dimensional non-K\"ahler, torsion-critical manifolds, and especially its subset of all balanced ones, for $n\geq 3$. At present, we have very limited understanding about this class, except knowing that it is non-empty on one hand and highly restrictive on the other hand,  in the sense that many familiar types of special Hermitian metrics are not in it (unless K\"ahler).

Denote by $\nabla^s$ the Strominger connection \cite{Strominger} of $(M^n,g)$. It is also known as Bismut connection \cite{Bismut} in many literature. A Hermitian manifold is called {\em Strominger torsion parallel,} or STP in short, if $\nabla^sT=0$. We have the following

\begin{proposition} \label{prop3}
Suppose $(M^n,g)$ is a compact STP manifold. Then it will be torsion-critical if and only it is balanced and with $\sigma_2=c\,\omega$ for some constant $c$.
\end{proposition}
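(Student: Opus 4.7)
The strategy is to start from the pointwise characterization \eqref{eq:tor-crit} of torsion-critical metrics given by Proposition 2, and to reduce it under the STP hypothesis $\nabla^s T = 0$ to the two stated conditions. Three immediate consequences of STP drive the reduction. First, since $\nabla^s$ preserves the metric and $T$ is $\nabla^s$-parallel, the scalar $|T|^2$ is constant, so $b = |T|^2$ and the right-hand side of \eqref{eq:tor-crit} collapses to $\tfrac{|T|^2}{n}\,\omega$. Second, writing the Chern connection as $\nabla = \nabla^s - \gamma$ with the Chern--Strominger contorsion $\gamma$ algebraically expressible in terms of $T$, the relation $\nabla^s T = 0$ becomes $\nabla T = \gamma\cdot T$; tracing the appropriate index expresses the divergence-type $(1,1)$-form $\xi$ as an explicit algebraic combination of $\sigma_1$, $\sigma_2$, and $\phi$. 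Third, combining these with \eqref{eq:trace} forces the metric to be Gauduchon whenever $(M^n,g)$ is both STP and torsion-critical.

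Substituting the resulting algebraic expression for $\xi$ into \eqref{eq:tor-crit} reduces it to a pointwise $(1,1)$-form identity of the schematic shape
\[
\alpha\,\sigma_1 \,+\, \beta\,\sigma_2 \,+\, \delta\,(\phi + \bar\phi) \;=\; \tfrac{|T|^2}{n}\,\omega
\]
for explicit real constants. For the forward direction, I would then test this identity by pairing with the $(1,1)$-form $\sqrt{-1}\,\eta\wedge\bar\eta$: each of the pairings of $\sigma_1$ and $\sigma_2$ against $\sqrt{-1}\,\eta\wedge\bar\eta$ becomes a manifestly nonnegative sum of squares in the $T^j_{ik}\bar\eta_\ast$ combinations, while the pairing against $\phi + \bar\phi$ admits a Cauchy--Schwarz bound of the same order. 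Balancing these against $\tfrac{|T|^2}{n}|\eta|^2$ should force $\eta = 0$. Once balanced, $\phi = 0$ and the STP expression for $\xi$ collapses to $\xi = 0$; the algebraic identity then reduces to $2\sigma_1 - \sigma_2 = \tfrac{|T|^2}{n}\,\omega$. A companion identity $\sigma_1 = \sigma_2$ for balanced STP (which I expect to follow from the first Bianchi identity for $\nabla^s$ under $\eta = 0$) finally yields $\sigma_2 = c\,\omega$ with $c = |T|^2/n$.

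The backward direction is then a verification: under STP, balanced, and $\sigma_2 = c\,\omega$, one has $\phi = 0$ and, via the STP identity specialized at $\eta = 0$, also $\xi = 0$; the balanced companion identity gives $\sigma_1 = \sigma_2 = c\,\omega$; and $|T|^2 = \mathrm{tr}_\omega\sigma_2 = nc$. Direct substitution makes both sides of \eqref{eq:tor-crit} equal to $c\,\omega$. The main obstacle throughout is the careful derivation of the two STP-driven identities: the algebraic expression of $\xi$ in terms of $\sigma_1, \sigma_2, \phi$, and the reduction $\sigma_1 = \sigma_2$ under the balanced condition. Both rest on explicit computation of the Chern--Strominger contorsion $\gamma$ in a local unitary frame, after which the remaining manipulations are routine algebra and index tracking.
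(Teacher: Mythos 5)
Your opening reduction is the same as the paper's: under $\nabla^sT=0$ the derivative tensor $\xi$ becomes algebraic in $T$, and indeed tracing the STP identity \eqref{eq:Tcommabar} gives $\xi_i^j=A_{i\bar j}-B_{i\bar j}+\phi_i^j$, so that upon substitution into \eqref{eq:tor-crit} the $\phi+\bar\phi$ terms cancel \emph{identically} and no first‑order‑in‑$\eta$ term survives (your schematic coefficient $\delta$ is $0$). The real gap is in your balancedness step. Pairing the reduced identity with $\sqrt{-1}\,\eta\wedge\bar\eta$ produces two manifestly nonnegative quantities entering with \emph{opposite} signs (the $\sigma_1$‑pairing and the $\sigma_2$‑pairing), so positivity plus Cauchy--Schwarz cannot force $\eta=0$; ``balancing these'' does not close. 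The ingredient you are missing is the third STP relation \eqref{eq:Teta} — the purely holomorphic quadratic identity obtained by combining $\nabla^sT=0$ with the first Bianchi identity — whose trace gives $\sum_r\eta_rT^r_{ik}=0$. At a point where $\eta$ is supported in the $e_n$ direction this forces $T^n_{ik}=0$ for all $i,k$, hence $B_{n\bar n}=0$, and evaluating the critical equation on the $(n,\bar n)$ component then yields $|T|^2=0$, a contradiction. Without that identity (or its consequence that the relevant $B$‑pairing against $\eta$ vanishes) the forward direction is not proved.

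Two of your auxiliary claims are also false, and your backward direction rests on both. First, once balanced, $\xi$ does not collapse to $0$: the STP identity gives $\xi=\sigma_1-\sigma_2+\phi$, so $\eta=0$ only yields $\xi=\sigma_1-\sigma_2$, which is nonzero in general. Second, the ``companion identity $\sigma_1=\sigma_2$ for balanced STP'' fails: the Iwasawa threefold (a compact quotient of the complex Heisenberg group with a left‑invariant compatible metric) is balanced, Chern‑flat, and STP (all quadratic corrections in Lemma \ref{lemma3} vanish for a $2$‑step nilpotent group), yet with $T^3_{12}$ the only independent nonzero torsion component one has $A_{1\bar 1}=|T^3_{12}|^2\neq 0=B_{1\bar 1}$, so $\sigma_1\neq\sigma_2$. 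The correct route for both directions is simply to substitute $\xi=\sigma_1-\sigma_2+\phi$ (and its conjugate) into \eqref{eq:tor-crit}, use that $|T|^2$ is constant under STP (your observation here is correct), and then invoke the trace of \eqref{eq:Teta} for the balancedness claim, as the paper does.
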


\begin{corollary}
For any $k\geq 3$, the complex Lie group $SO(k,{\mathbb C})$, equipped with a compatible metric, is balanced, non-K\"ahler, SPT, and with $\sigma_2$ equal to a constant multiple of $\omega$. So any compact quotient of it is a non-K\"ahler, balanced, torsion-critical manifold.
\end{corollary}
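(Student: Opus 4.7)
The plan is to exhibit a left-invariant Hermitian metric $g$ on $G = SO(k,\mathbb{C})$ that is STP, balanced, and has $\sigma_2=c\,\omega$ for some constant $c$, and then apply Proposition \ref{prop3}. Left-invariance of $g$ will automatically deliver the corresponding structure on every cocompact lattice quotient $\Gamma\backslash G$.

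Let $\mathfrak{g}=\mathfrak{so}(k,\mathbb{C})$ with compact real form $\mathfrak{k}=\mathfrak{so}(k,\mathbb{R})$, and let $\tau$ denote the conjugate-linear involution of $\mathfrak{g}$ fixing $\mathfrak{k}$ pointwise. With $B$ the complex Killing form of $\mathfrak{g}$, put $\langle X,Y\rangle:=-B(X,\tau Y)$; this is a positive-definite, $J$-Hermitian, $\operatorname{Ad}(SO(k))$-invariant inner product on $\mathfrak{g}$, and left-translating it yields the desired left-invariant Hermitian metric $g$ on $G$.

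Next I would identify the Chern data. Since $\mathfrak{g}$ is a complex Lie algebra, the left-invariant $(1,0)$-forms $\{\varphi_k\}$ satisfy the Maurer--Cartan relation $d\varphi_k=-\tfrac12\sum_{i,j}C^k_{ij}\,\varphi_i\wedge\varphi_j$, which is of pure type $(2,0)$. Matching types in the Chern structure equation, combined with the skew-Hermiticity of the connection 1-forms in a unitary frame, forces all connection 1-forms to vanish in the left-invariant unitary frame $\{e_i\}$ dual to $\{\varphi_k\}$; this is the classical fact that every left-invariant Hermitian metric on a complex Lie group is Chern-flat. The Chern torsion is therefore simply the Lie bracket, $T^k_{ij}=-C^k_{ij}$, and $\eta_i=\sum_k T^k_{ki}=-\tr(\operatorname{ad}e_i)=0$ by semisimplicity of $\mathfrak{g}$, so $g$ is balanced; non-K\"ahlerness is immediate from $T\neq 0$. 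For $\sigma_2=c\,\omega$, observe that at the identity $B_{i\bar j}=\sum_{r,s}T^j_{rs}\overline{T^i_{rs}}$ defines a Hermitian endomorphism $\beta$ of $\mathfrak{g}^{1,0}$ commuting with the $\operatorname{Ad}(SO(k))$-action (since both the bracket and $\langle\cdot,\cdot\rangle$ are $\operatorname{Ad}(SO(k))$-invariant). The $\operatorname{Ad}(SO(k))$-action on $\mathfrak{g}^{1,0}\cong\mathfrak{g}$ is irreducible for $k\geq 3,\ k\neq 4$, and for $k=4$ decomposes as two $\mathfrak{sl}(2,\mathbb{C})$-adjoint summands on which the Casimir acts by equal scalars. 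Schur's lemma then gives $\beta=c\cdot\mathrm{id}$, i.e., $\sigma_2=c\,\omega$.

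The main obstacle is the STP verification $\nabla^s T=0$. Because every relevant tensor is left-invariant, so is $\nabla^s T$, and it suffices to verify vanishing at the identity. Writing $\nabla^s=\nabla+A$ with $\nabla$ the (flat) Chern connection and $A=\nabla^s-\nabla$ the left-invariant endomorphism-valued 1-form, one has $\nabla^s T=A\cdot T$. The tensor $A$ is pinned down by the characterization of the Strominger connection via its totally-skew torsion 3-form; combining this with the $\operatorname{Ad}(SO(k))$-invariance of $g$ and the identity $\langle[X,Y],Z\rangle+\langle Y,[X,Z]\rangle=0$ for $X\in\mathfrak{k}$ expresses $A\cdot T$ as an $\operatorname{Ad}(SO(k))$-equivariant combination of bracket and Killing-form contractions, which collapses by the Jacobi identity and $\tr\operatorname{ad}=0$. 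With STP, balanced, and $\sigma_2=c\,\omega$ in hand, Proposition \ref{prop3} declares $(G,g)$ torsion-critical; left-invariance of $g$ ensures it descends to every cocompact lattice quotient $\Gamma\backslash G$, which exists for the semisimple complex Lie group $G$ by Borel--Harish-Chandra.
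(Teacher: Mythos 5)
Your route is genuinely different from the paper's and, modulo one under-justified step, it works. The paper does not argue via representation theory at all: in \S 4 it writes down the explicit left-invariant unitary coframe on $SO(3,\mathbb{C})$ with $d\varphi_1=\varphi_2\wedge\varphi_3$ (cyclically), reads off $T^1_{23}=T^2_{31}=T^3_{12}=-1$, computes $A_{i\bar j}=B_{i\bar j}=2\delta_{ij}$, $|T|^2=b=6$, and verifies the Euler--Lagrange equation (\ref{eq:tor-crit}) directly (using $\eta=\phi=\xi=0$ from Chern-flatness and Chern-parallel torsion), then asserts the general $k$ by analogy. Your argument buys uniformity in $k$: Chern-flatness forces $T^j_{ik}=-C^j_{ik}$ and $\nabla T=0$, balancedness follows from unimodularity of a semisimple algebra, and $\sigma_2=c\,\omega$ drops out of Schur's lemma applied to the $\operatorname{Ad}(SO(k,\mathbb{R}))$-equivariant Hermitian endomorphism $B$ (with the $k=4$ case handled by the symmetry exchanging the two $\mathfrak{sl}(2,\mathbb{C})$ ideals). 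Routing the conclusion through Proposition \ref{prop3} rather than through equation (\ref{eq:tor-crit}) is also closer to how the Corollary is actually stated. Your remark on existence of cocompact lattices is a welcome addition the paper omits.

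The one step you should not leave as a plan is the STP verification. Since $\nabla T=0$, what must be shown is exactly that the right-hand sides of (\ref{eq:Tcomma}) and (\ref{eq:Tcommabar}) vanish with $T^j_{ik}=-C^j_{ik}$. Choose the unitary frame inside the compact real form $\mathfrak{k}=\mathfrak{so}(k,\mathbb{R})$ (rescaled by $-B$); then the structure constants $C^j_{ik}$ are \emph{real} and, by $\operatorname{ad}(\mathfrak{k})$-invariance of the metric, \emph{totally skew-symmetric} in all three indices. The right-hand side of (\ref{eq:Tcomma}) is then literally the Jacobi identity, and the right-hand side of (\ref{eq:Tcommabar}), after removing the now-trivial conjugations and using total skew-symmetry to reshuffle indices, reduces to (minus) the same Jacobi identity. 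Your appeal to ``equivariant contractions collapsing'' gestures at this but does not isolate the two facts that make it work --- reality and total skewness of the structure constants in that particular frame, which is where the choice of compatible metric genuinely enters (for a generic left-invariant Hermitian metric on $SO(k,\mathbb{C})$ the $(0,1)$-part does \emph{not} vanish and the metric is not STP). Spelling out those three lines closes the argument.
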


Since the product of torsion-critical manifolds are torsional-critical, so by taking product, we know that the set ${\mathcal H}^{tc}_n$ (and actually its subset of balanced ones) is non-empty for all $n\geq 3$. This settles the non-emptiness of the class. To illustrate its restrictiveness, we make the following observation:

\begin{proposition} \label{prop4}
Let $(M^n,g)$ be a compact Hermitian manifold belonging to one of the following
\begin{enumerate}
\item locally conformally K\"ahler, or
\item Strominger K\"ahler-like, or
\item complex nilmanifold with nilpotent $J$ in the sense of \cite{CFGU}, or
\item all the complex nilmanifolds or Calabi-Yau type complex solvmanifolds in dimension $3$, listed by \cite[Table 1 and 2]{AOUV}
\end{enumerate}
Then $g$ cannot be torsion-critical unless it is already K\"ahler.
\end{proposition}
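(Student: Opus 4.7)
The plan is to treat each of the four classes separately, using the torsion-critical equation (\ref{eq:tor-crit}) of Proposition \ref{prop2} together with the respective structural hypothesis to force $T=0$. For case (i), the LCK condition has two consequences: the Chern torsion is \emph{vectorial}, namely $T^r_{ik}=\tfrac{1}{n-1}(\delta^r_i\eta_k-\delta^r_k\eta_i)$, and the Lee form $\theta=-(\eta+\bar\eta)$ is closed. The first identity allows $\sigma_1,\sigma_2,\phi$ to be computed explicitly in terms of $|\eta|^2\omega$ and $\sqrt{-1}\eta\wedge\bar\eta$, and also reduces $\xi+\bar\xi$ to $\tfrac{2\chi}{n-1}\omega-\tfrac{\sqrt{-1}}{n-1}(\partial\bar\eta-\bar\partial\eta)$. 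Substituting everything into (\ref{eq:tor-crit}) and using the type decomposition of $d\theta=0$ (which yields $\partial\eta=0$ and $\bar\partial\eta=-\partial\bar\eta$), then tracing against $\omega$ to pin down that $2\chi-|\eta|^2$ is forced to be a constant, the whole identity collapses to
\[
\sqrt{-1}(\partial\bar\eta-\bar\partial\eta-\eta\wedge\bar\eta) \,=\, a\,\omega
\]
for a nonnegative constant $a$. Proposition \ref{prop1} then gives $\eta=0$, and the vectorial form of $T$ upgrades this to $T=0$.

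For case (ii), I would invoke the recent characterization that a Strominger K\"ahler-like metric is automatically pluriclosed and satisfies $\nabla^sT=0$, so the STP hypothesis of Proposition \ref{prop3} is in force. A torsion-critical SKL metric is therefore balanced, and the combination \emph{balanced $+$ SKL} is known to force K\"ahlerness (this is the SKL case of the Fino--Vezzoni picture).

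For cases (iii) and (iv) I would reduce to the left-invariant setting on the underlying Lie algebra, so that every $T^j_{ik}$ and every covariant derivative $T^j_{ik,\bar r}$ is a constant in any invariant unitary frame and (\ref{eq:tor-crit}) becomes a finite algebraic system on $\mathfrak{g}^{1,0}$. For (iii), the CFGU nilpotency of $J$ supplies a $J$-stable ascending filtration $0\subset V_1\subset\cdots\subset V_s=\mathfrak{g}^{1,0}$ with $[V_k,V_k]\subset V_{k-1}$, which places the non-zero structure constants in an upper-triangular pattern; reading off the top-most diagonal components of (\ref{eq:tor-crit}) kills the top block of $T$, and one descends the filtration inductively. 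For (iv) the Lie algebras in \cite[Tables 1 and 2]{AOUV} are finite in number; for each one I would parametrize the invariant Hermitian metrics, compute $\sigma_1,\sigma_2,\phi,\xi$ in a suitable invariant frame, and verify by direct linear algebra that no non-K\"ahler solution of (\ref{eq:tor-crit}) exists.

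The principal obstacle lies in case (i): although substituting the vectorial $T$ into (\ref{eq:tor-crit}) is mechanical, extracting (\ref{eq:Gau-crit}) from the resulting identity requires using \emph{both} pieces of $d\theta=0$ (the vanishing of $\partial\eta$ and the relation $\bar\partial\eta=-\partial\bar\eta$) and, crucially, showing that $2\chi-|\eta|^2$ is pointwise constant by combining the trace of (\ref{eq:tor-crit}) with the integral identity $\int_M\chi\,dv=\int_M|\eta|^2\,dv$ coming from $\sqrt{-1}\partial\bar\partial\omega^{n-1}=(|\eta|^2-\chi)\omega^n$; the dimension $n=2$ must in fact be handled separately (there the trace identity degenerates and directly forces $b=0$). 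A subsidiary but unavoidable bookkeeping burden arises in (iv), where each entry in the AOUV tables must be verified individually, though each such case is computationally routine once a normalization of the invariant frame is fixed.
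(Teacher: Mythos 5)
Your four-way case division and the reductions you propose coincide with the paper's strategy: case (i) reduces to Proposition \ref{prop1} via the vectorial form of the torsion, case (ii) goes through Proposition \ref{prop3} plus the fact that balanced Strominger K\"ahler-like metrics are K\"ahler (\cite{ZZ}), case (iii) uses Salamon's adapted coframe, and case (iv) is a finite case-check on the tables of \cite{AOUV}. Two remarks on where your plan diverges from (and is heavier than) the actual argument. In case (i), the paper never needs $d\theta=0$: since (\ref{eq:tor-crit}) is an identity of $(1,1)$-forms and, for vectorial torsion, $\xi_i^j=\frac{1}{n-1}(\delta_{ij}\chi-\eta_{i,\bar j})$ involves only $\bar\partial$-type derivatives of $\eta$, direct substitution already yields $\eta_{i,\bar j}+\overline{\eta_{j,\bar i}}-\eta_i\overline{\eta}_j=\frac{n-1}{2n}b\,\delta_{ij}$, which \emph{is} equation (\ref{eq:Gau-crit}) with $a=\frac{n-1}{2n}b\geq 0$; no separate treatment of $\partial\eta$, no auxiliary trace/integral identity, and no special handling of $n=2$ is required. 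In case (iii), your sketch is missing the mechanism that actually closes the argument: in Salamon's frame one has $C^r_{ns}=D^r_{sn}=0$ for all $r,s$, so the $(n,\bar n)$ entry of (\ref{eq:tor-crit}) becomes $-\sum_{r,s}\{2|D^r_{ns}|^2+|C^n_{rs}|^2\}=\frac{1}{n}|T|^2$ (the right-hand side is $\frac1n|T|^2$ because $b=|T|^2$ for an invariant metric). A nonpositive quantity equals a nonnegative one, so $|T|^2=0$ outright; this kills the \emph{entire} torsion in one step, not just a "top block," and your proposed inductive descent of the filtration is both unnecessary and hard to justify (killing the top block does not hand you a smaller nilmanifold with nilpotent $J$ on which to repeat the argument). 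Without identifying this sign comparison, "reading off the top-most diagonal components kills the top block" is not yet a proof, so make that positivity observation explicit.
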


Since the class seems to be so restrictive, it would be very attempting to try to classify it, at least in low dimensions or for some special types of Hermitian metrics. For instance, one could try to

{\em Classify all balanced, non-K\"ahler, torsion-critical manifolds in dimension $3$.}

{\em Classify all torsion-critical manifolds that are Chern flat \cite{Boothby}.}

{\em Classify all torsion-critical manifolds that are pluriclosed, namely, $\partial \overline{\partial}\omega =0$.}

Similar to balanced manifolds, pluriclosed manifolds (also known as {\em strong K\"ahler with torsion}, or SKT for short in many literature) are an important class of special Hermitian metrics that are widely studied. We refer the readers to the excellent paper \cite{FinoTomassini} for more discussion.

The variational method is a classical approach in geometry and analysis, with a long and rich history. Various types of special non-K\"ahler metrics are also widely studied in Hermitian geometry. As a limited sampler, we refer the readers to the following papers and the references therein for more discussions:  \cite{AI}, \cite{AU}, \cite{EFV}, \cite{FinoVezzoni}, \cite{Fu}, \cite{Gau97}, \cite{Goldberg}, \cite{KYZ}, \cite{Schoen}, \cite{STW}, \cite{Tosatti}, \cite{TW}, \cite{Tseng-Yau}, \cite{WYZ}, \cite{YZ}, \cite{YZ1},  and \cite{Zheng}.

This short article is organized as follows. In \S 2, we will give a proof to Proposition \ref{prop1}. In \S 3, we will deduce the Euler-Lagrange equation, namely, Proposition \ref{prop2}. In \S 3, we will prove Propositions \ref{prop3} and \ref{prop4}.

\vspace{0.3cm}

\section{Proof of Proposition \ref{prop1}}

Let $(M^n,g)$ be a compact Hermitian manifold of complex dimension $n\geq 2$.  Let us first verify that equation (\ref{eq:Gau-crit}) is indeed the Euler-Lagrange equation of ${\mathcal G}$, namely, it is the same as formula (48bis) of \cite{Gau84}.

Recall that the {\em Lee form} $\theta$ is the $1$-form on $M^n$ defined by $d\omega^{n-1}=\theta \wedge \omega^{n-1}$, where $\omega$ is the K\"ahler form. So clearly, one has
 $$ \theta = - (\eta + \overline{\eta}), \ \ \ \ \mbox{hence} \ \ \  \ J\theta = \sqrt{-1}(\overline{\eta}-\eta ).$$
Therefore,
$$ (dJ\theta)^{1,1} = \sqrt{-1}(\partial \overline{\eta} - \overline{\partial}\eta ), \ \ \ \ \ \ \theta \wedge J\theta = -2 \sqrt{-1}\eta \wedge \overline{\eta}. $$
So by Gauduchon's formula (48bis) in \cite{Gau84}, we get equation (\ref{eq:Gau-crit}).

Now assume that $g$ is a critical point of the Gauduchon functional ${\mathcal G}$. So we have equation (\ref{eq:Gau-crit}), which is in complex form thus being more convenient. Gauduchon's proof of his Theorem III.4 in \cite{Gau84} for the $n=2$ case is an integration-by-part argument, using Stokes' theorem repeatedly. Below we give a brief account of his proof for readers convenience.

\begin{proof}[{\bf Gauduchon's proof of Proposition \ref{prop1} in the $n=2$ case}]

Let $(M^2,g)$ be a compact Hermitian surface with $g$ satisfying equation (\ref{eq:Gau-crit}), where the constant $a$ is equal to $\frac{1}{2V}\int_M|\eta|^2dv$.  Rewrite the equation as
$$ L:=\sqrt{-1}(\partial \overline{\eta} - \overline{\partial}\eta ) = a\,\omega + \sqrt{-1}\eta\wedge \overline{\eta} :=R.$$
For both sides, take the wedge product of the $(1,1)$-form with itself, we get
\begin{eqnarray*}
 R\wedge R &= &a^2 \omega^2 + 2a \sqrt{-1} \eta\wedge \overline{\eta} \wedge \omega = (a^2+a|\eta|^2)\omega^2,\ \ \ \ \mbox{while} \\
L\wedge L & = & - \partial \overline{\eta} \,\partial \overline{\eta} - \overline{\partial}\eta \, \overline{\partial}\eta + 2 \partial \overline{\eta}\, \overline{\partial}\eta \\
&=& -\partial (\overline{\eta} \,\partial \overline{\eta} ) - \overline{\partial} (\eta \, \overline{\partial}\eta) + 2 \{ \partial(\overline{\eta} \,\partial \overline{\eta}) + \overline{\partial} (\overline{\eta} \,\partial \eta ) - \partial \eta \, \overline{\partial \eta} \}
\end{eqnarray*}
Therefore $\,\int\! L\wedge L = - \int \partial \eta \, \overline{\partial \eta} \,\leq 0$. On the other hand, $\int\! R\wedge R= 3a^2V\geq 0$, so it forces $a=0$ hence $\eta =0$. This completes the proof of Gauduchon's theorem.
\end{proof}

Note that when $n\geq 3$, one has to wedge the above $(2,2)$-forms with $\omega^{n-2}$, which messes up the exact forms, and further integration-by-part will involve the differential of $\omega$, and the integral of the left hand side is no longer clearly non-positive. So one has to seek an alternative argument.

\begin{proof}[{\bf Proof of Proposition \ref{prop1}}]
Let $(M^n,g)$ be a compact Hermitian manifold with $n\geq 2$ satisfying equation (\ref{eq:Gau-crit}). The constant $a$ is given by $\frac{1}{nV}\int |\eta|^2dv$. Our goal is to show that $a=0$. Assume the contrary, namely, $a>0$. Let us as before move the third term of the left hand side to the right, and get
$$ \sqrt{-1}(\partial \overline{\eta} - \overline{\partial}\eta ) = a\,\omega + \sqrt{-1}\eta\wedge \overline{\eta} := \omega_0 .$$
Since $a>0$, $\omega_0\geq a\omega $ is the K\"ahler form of another Hermitian metric on $M^n$. Let $\alpha = -\sqrt{-1}\partial \eta$ and $\psi = \sqrt{-1}(\overline{\eta}-\eta)$. Then we have
$ d\psi = \omega_0 + \alpha + \overline{\alpha}$. Since $\alpha \overline{\alpha }\geq 0$, we have
$$ (d\psi )^n = \omega_0^n + C_n^2C_2^1\omega_0^{n-2}\alpha \overline{\alpha} + C_n^4C_4^2\omega_0^{n-4}(\alpha \overline{\alpha})^2 + \cdots \geq \omega_0^n , $$
hence $\int_M (d\psi)^n>0$. But that is a contradiction since $(d\psi )^n=d \big( \psi \wedge (d\psi)^{n-1}\big)$ is exact. This means that the assumption $a>0$ can not occur, therefore $a$ must be zero, thus $\eta=0$. This completes the proof of Proposition \ref{prop1}.
\end{proof}

In other words, under the assumption that $a>0$, the above construction leads to an exact symplectic form $d\psi$ (namely the Liouville type symplectic manifold), which cannot occur on closed manifold, and this argument works in all dimensions.

\vspace{0.3cm}

\section{Torsion-critical manifolds}

In this section we will derive the Euler-Lagrange equation of the functional ${\mathcal F}$ given by (\ref{eq:F}) and prove Proposition \ref{prop2}.

Let $(M^n,g)$ be a compact Hermitian manifold. Denote by $\nabla$ the Chern connection and $T$ its torsion tensor. Extend $g=\langle , \rangle$ bilinearly over ${\mathbb C}$. Under a local holomorphic coordinate system $z=(z_1,\ldots , z_n)$, consider the natural frame $\{ \partial_1 , \ldots , \partial_n\}$ where $ \partial_i= \frac{\partial}{\partial z_i}$, $1\leq i\leq n$. Write $g_{i\bar{j}}=\langle \partial_i , \overline{\partial}_j\rangle$ for the entries of the $n\times n$ matrix of $g$, which we will also denote by $g$, and denote its inverse matrix by $g^{-1}=(g^{\bar{j}i})$. Under the natural frame, $\nabla$ has components
\begin{equation*}
\nabla_{\partial_k} \partial_i = \sum_{j} \Gamma^j_{ik} \partial_j  = \sum_{j} \big( \sum_{\ell} \partial_k(g_{i\bar{\ell}})\,g^{\bar{\ell}j} \big) \partial_j.
\end{equation*}
We have $T(\partial_i,\overline{\partial}_j)=0$, $\ T(\partial_i,\partial_k)=\sum_j T^j_{ik}\partial_j$ with
\begin{equation} \label{eq:T}
T^j_{ik} = \sum_{\ell} \big(\partial_i(g_{k\bar{\ell}}) - \partial_k(g_{i\bar{\ell}})\big) g^{\bar{\ell}j}.
\end{equation}

Let $\omega = \sqrt{-1}\sum_{i,j} g_{i\bar{j}} dz_i \wedge d\bar{z}_j$ be the K\"ahler form of $g$. Gauduchon's torsion $1$-form $\eta$ is given by $\eta = \sum_i \eta_i dz_i$ where
\begin{equation} \label{eq:eta}
\eta_i = \sum_k T^k_{ki} = \sum_{k,\ell} \big( \partial_k(g_{i\bar{\ell}}) - \partial_i(g_{k\bar{\ell}}) \big) g^{\bar{\ell}k} = \sum_k \big( \Gamma^k_{ik} -\Gamma^k_{ki} \big) .
\end{equation}

Under a local coordinate system, we have $\, |\eta|^2=\sum_{i,j}\eta_i \overline{\eta_j} g^{\bar{j}i}\,$, $\ \chi = \sum_{i,j}\eta_{i,\bar{j}}g^{\bar{j}i}\,$, and
\begin{eqnarray*}
  && \sigma_1 = \sqrt{-1} \sum_{i,j} A_{i\bar{j}} dz_i \wedge d\overline{z}_j, \ \ \ \ \ \ \ A_{i\bar{j}}=\sum_{r,s,p,q}T^r_{is} \overline{T^p_{jq}} g_{r\bar{p}} g^{\bar{q}s}, \\
 &&  \sigma_2 = \sqrt{-1} \sum_{i,j} B_{i\bar{j}} dz_i \wedge d\overline{z}_j, \ \ \ \ \ \ \ B_{i\bar{j}}=\sum_{r,s,p,q,k,\ell}T^{\ell}_{rs} \overline{T^k_{pq}} g^{\bar{p}r} g^{\bar{q}s}g_{i\bar{k}} g_{\ell \bar{j}}\,, \\
 && \phi = \sqrt{-1} \sum_{i,j} \phi_{i}^{\ell}g_{\ell \bar{j}} \,dz_i \wedge d\overline{z}_j, \ \ \ \ \ \ \phi_{i}^j=\sum_{r,s}T^j_{ir}\overline{\eta_s}  g^{\bar{s}r} ,\\
 &&  \xi = \sqrt{-1} \sum_{i,j} \xi_{i}^{\ell} g_{\ell \bar{j}} \, dz_i \wedge d\overline{z}_j, \ \ \ \ \ \ \ \ \xi_{i}^j=\sum_{r,s}T^j_{ir,\bar{s}}  g^{\bar{s}r} .
\end{eqnarray*}

Now suppose that $X$ is a type $(1,0)$-vector field on $M^n$. In a coordinate neighborhood, write $X=\sum^i X^i \partial_i$. The {\em divergence} of $X$ is the global function on $M^n$ defined by $div(X) = \sum_i X^i_{\,,i}$, where the index after comma stands for covariant derivative with respect to $\nabla$. Denote by $\lrcorner$ the contraction on differential forms by vector fields. As is well known, we have
\begin{equation*}
d (X \lrcorner \,  \omega^n) = \big( div (X) - \eta (X) \big) \omega^n,
\end{equation*}
Integrating over $M^n$, we get the divergence theorem on Hermitian manifolds:

\begin{lemma} \label{lemma1}
Let $(M^n,g)$ be a compact Hermitian manifold, and $X$ a type $(1,0)$ vector field on $M^n$. Then it holds that
\begin{equation*}
\int_M div (X) \omega^n  =  \int_M \eta (X) \omega^n . \label{divergence}
\end{equation*}
\end{lemma}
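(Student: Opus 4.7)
The plan is to derive the lemma by integrating the pointwise identity
\[
d(X \lrcorner\, \omega^n) = \bigl(div(X) - \eta(X)\bigr)\omega^n
\]
displayed just above it, and then invoking Stokes' theorem on the closed manifold $M^n$. The task thus splits into two parts: verify the pointwise identity, then integrate.

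To verify the identity, I would work in a local holomorphic chart and expand
\[
X \lrcorner\, \omega^n = n\, (X \lrcorner\, \omega) \wedge \omega^{n-1},
\]
noting that since $X$ is of type $(1,0)$, $X \lrcorner \omega$ is a $(0,1)$-form with coefficients $\sqrt{-1}\sum_i g_{i\bar j} X^i$ in $d\bar z_j$. Applying the Leibniz rule gives
\[
d(X \lrcorner\, \omega^n) = n\, d(X \lrcorner\, \omega) \wedge \omega^{n-1} - n\, (X \lrcorner\, \omega) \wedge d\omega^{n-1},
\]
and a bidegree count shows that only the $\partial$-pieces survive against $\omega^{n-1}$: the $\bar\partial$-contributions live in bidegrees whose $\bar z$-part exceeds $n$ after wedging, hence vanish. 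For the first surviving term, I would convert $\partial_k X^i$ into the Chern-covariant derivative $X^i_{,k}$ using the Christoffel symbols $\Gamma^j_{ik}$ of formula (\ref{eq:T}); this produces $div(X)\,\omega^n$ together with an error proportional to $\sum_k(\Gamma^k_{ki} - \Gamma^k_{ik})$, which by (\ref{eq:eta}) is precisely $-\eta_i$. The second surviving term, via the defining relation $\partial\omega^{n-1} = -\eta\wedge\omega^{n-1}$, contributes the remaining piece needed to assemble the full $-\eta(X)\,\omega^n$. Conceptually the correction $-\eta(X)$ is nothing but the trace of the torsion of the Chern connection, which is exactly the discrepancy between naive exterior calculus and genuine Chern divergence.

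Once the pointwise identity is in hand, the conclusion is immediate: since $M^n$ is compact without boundary and $X \lrcorner\, \omega^n$ is a smooth $(2n-1)$-form, Stokes' theorem gives $\int_M d(X \lrcorner\, \omega^n) = 0$, and the identity then forces $\int_M div(X)\,\omega^n = \int_M \eta(X)\,\omega^n$. There is no genuine analytic or geometric obstacle; the only step requiring real care is the sign and index bookkeeping in the coordinate verification of the pointwise identity.
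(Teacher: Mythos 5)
Your proposal is correct and follows essentially the same route as the paper: the paper likewise obtains the lemma by integrating the pointwise identity $d(X \lrcorner\, \omega^n) = \big(div(X) - \eta(X)\big)\omega^n$ over the closed manifold and applying Stokes' theorem, the only difference being that the paper quotes that identity as well known while you sketch its coordinate verification. Your sketch is sound, though the bookkeeping comes out slightly cleaner than you anticipate: the term $n\,\partial(X\lrcorner\,\omega)\wedge\omega^{n-1}$ already equals $div(X)\,\omega^n$ exactly, and the term $-n\,(X\lrcorner\,\omega)\wedge\partial\omega^{n-1}$ equals $-\eta(X)\,\omega^n$ exactly, so the torsion correction comes entirely from the second term rather than being split between the two.
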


Next, we consider tensor fields on $M^n$. Using the Hermitian metric $g$,  tensor fields can be transformed from covariant type to contra-variant type and vice versa. So we just need to consider tensor fields of `pure type', namely, with only holomorphic part but without anti-holomorphic part. Let us denote by ${\mathcal T}^q_{\,p}$ the vector space of all pure type $(p,q)$ tensor fields on $M$. In a local coordinate neighborhood, $C\in {\mathcal T}^q_{\,p}$ has the expression
$$ C = \sum C_{I}^J \,dz_{i_1}\otimes \cdots \otimes dz_{i_p} \otimes \partial_{j_1}\otimes \cdots \otimes \partial_{j_q}.$$
Here $\partial_i=\frac{\partial}{\partial z_i}$,  and we write for convenience $I=(i_1, \ldots , i_p)$, $J=(j_1, \ldots , j_q)$. Each $i_k$ or $j_k$ is summed from $1$ to $n$. The Hermitian metric $g$ on $M^n$ naturally induces a Hermitian inner product $(\,,\,)$ on ${\mathcal T}_{\,p}^q$ by
$$ (C,D) = \int_M \langle C, \overline{D} \rangle \,dv ,$$ where
$$ \langle C, \overline{D} \rangle = \sum  C_I^J\,\overline{D_K^L} \, g^{\bar{k}_1i_1} \cdots g^{\bar{k}_pi_p} \, g_{j_1\bar{\ell}_1} \cdots g_{j_q\bar{\ell}_q} ,$$
and all indices are summed from $1$ to $n$. For $C\in {\mathcal T}^q_{\,p}$, its $(1,0)$-covariant derivative $\nabla 'C \in {\mathcal T}^q_{p\!+\!1}$ is defined by
$$ (\nabla'C)_{i_1\cdots i_{p\!+\!1}}^{j_1 \cdots  j_q} = (\nabla_{i_{p\!+\!1}}C)_{i_1 \cdots i_{p}}^{j_1 \cdots  j_q} = C_{i_1 \cdots i_{p}, \, i_{p\!+\!1} }^{j_1 \cdots  j_q}. $$
Applying the divergence theorem, we get the following well-known integration-by-part formula, and we include a brief proof here for readers convenience.

\begin{lemma} \label{lemma2}
On a compact Hermitian manifold $(M^n,g)$, for any $C\in {\mathcal T}^q_{\,p}$ and $D\in {\mathcal T}^q_{p\!+\!1}$, it holds that
$$ ( \nabla ' C , D) = ( C, D^1\!-\!D^0), $$
 where $D^1$, $D^0\in {\mathcal T}^q_{\,p}$ are given respectively by
 $$ (D^1)_I^J = \sum_{i,j} D_{Ii}^J \overline{\eta}_jg^{\bar{j}i}, \ \ \ \ (D^0)_I^J = \sum_{i,j} D^J_{Ii,\,\bar{j}} g^{\bar{j}i} .$$
\end{lemma}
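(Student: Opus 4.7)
The plan is to reduce this identity to Lemma 1 by constructing a suitable $(1,0)$-vector field $X$ whose divergence produces the difference $\langle \nabla'C,\overline{D}\rangle-\langle C,\overline{D^0}\rangle$ and whose contraction with $\eta$ recovers $\langle C,\overline{D^1}\rangle$. Concretely, in a local holomorphic chart I would define
$$X^i=\sum C_I^J\,\overline{D^L_{K\ell}}\, g^{\overline{K}I}g^{\overline{\ell}i}g_{J\overline{L}},$$
where $g^{\overline{K}I}$ and $g_{J\overline{L}}$ abbreviate the products of metric entries that contract the $p$ holomorphic lower indices of $C$ with the first $p$ of $\overline{D}$, and the $q$ holomorphic upper indices of $C$ with those of $\overline{D}$; the remaining extra lower index $\ell$ of $D$ is raised with $g^{\overline{\ell}i}$ to provide the single free upper index of $X$.

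Next I would compute $\mathrm{div}(X)=\sum_i X^i_{,i}$. Because $\nabla g=0$, the Chern-covariant derivative commutes with the metric contractions, so the product rule produces exactly two nonzero pieces: one in which the derivative lands on $C$, and one in which it lands on $\overline{D}$. The first piece equals
$$\sum C_{I,i}^J\,\overline{D^L_{K\ell}}\,g^{\overline{K}I}g^{\overline{\ell}i}g_{J\overline{L}}=\langle \nabla'C,\overline{D}\rangle,$$
since $(\nabla'C)^J_{Ii}=C^J_{I,i}$ and the pairing contracts the new index $i$ with the extra index $\ell$ of $D$ via $g^{\overline{\ell}i}$. For the second piece, the key observation is that for the Chern connection the Christoffel symbols in a $(1,0)$-direction act only on holomorphic indices, so on the purely anti-holomorphic components $\overline{D^L_{K\ell}}$ one has $\nabla_i\overline{D^L_{K\ell}}=\partial_i\overline{D^L_{K\ell}}=\overline{\partial_{\overline{i}}D^L_{K\ell}}=\overline{D^L_{K\ell,\overline{i}}}$. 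Plugging in and re-labelling shows this term is exactly $\langle C,\overline{D^0}\rangle$. Hence
$$\mathrm{div}(X)=\langle\nabla'C,\overline{D}\rangle+\langle C,\overline{D^0}\rangle.$$

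Then I would evaluate $\eta(X)=\sum_i\eta_i X^i$. Substituting the defining formula for $X^i$ and comparing with $(D^1)^J_I=\sum_{i,j}D^J_{Ii}\overline{\eta}_j g^{\overline{j}i}$, whose conjugate feeds into $\langle C,\overline{D^1}\rangle$, one sees directly that $\eta(X)=\langle C,\overline{D^1}\rangle$. Applying Lemma 1 to this $X$ yields
$$\int_M\big(\langle\nabla'C,\overline{D}\rangle+\langle C,\overline{D^0}\rangle\big)\,dv=\int_M\langle C,\overline{D^1}\rangle\,dv,$$
which is the desired $(\nabla'C,D)=(C,D^1-D^0)$.

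The calculation is essentially bookkeeping; the only place that could trip one up is the step where the derivative is transferred to $\overline{D}$. The point I expect to require the most care is verifying that $\nabla_i\overline{D^L_{K\ell}}=\overline{D^L_{K\ell,\overline{i}}}$, i.e.\ that the Chern connection's vanishing Christoffels in the $(1,0)$-direction on anti-holomorphic index bundles makes the product-rule term land precisely on the $\overline{\partial}$-covariant derivative of $D$ traced in its extra index, so that it matches $D^0$ exactly (with the correct position of the bar on the metric) and not some other contraction. Once that identification is made, the rest follows mechanically from Lemma 1.
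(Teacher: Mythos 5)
Your proposal is correct and follows essentially the same route as the paper: you construct the same auxiliary $(1,0)$-vector field $X$ by contracting $C$ against $\overline{D}$ with one index of $D$ left free and raised, verify $\mathrm{div}(X)=\langle\nabla'C,\overline{D}\rangle+\langle C,\overline{D^0}\rangle$ and $\eta(X)=\langle C,\overline{D^1}\rangle$, and conclude by the divergence theorem of Lemma \ref{lemma1}. The one step you flagged as delicate, namely $\nabla_i\overline{D^L_{K\ell}}=\overline{D^L_{K\ell,\overline{i}}}$, is indeed valid for the Chern connection, so the argument goes through exactly as in the paper.
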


\begin{proof}
Let $X$ be the type $(1,0)$ vector field on $M^n$ given in any local coordinate neighborhood by $X=\sum_i X^i\partial_i$ where
$$ X^i = \sum C_{i_1\cdots i_p}^{j_1 \cdots j_q } \,\overline{  D_{k_1\cdots k_{p\!+\!1}}^{\ell_1 \cdots \ell_q }  } \, g^{\bar{k}_1i_1} \cdots g^{\bar{k}_pi_p} g^{\bar{k}_{p\!+\!1}i}\, g_{j_1\bar{\ell}_1} \cdots g_{j_q\bar{\ell}_q}. $$
Here and below we used Einstein's convention that repeated indices are summed up from $1$ to $n$. From this, we get
 $$ \eta (X) = \langle C, \overline{D^1}\rangle , \ \ \ \ \ div(X) = X^i_{\,,i} = \langle \nabla 'C, \overline{D}\rangle + \langle C, \overline{D^0} \rangle .$$
 Combine the two identities and use the divergence theorem (Lemma \ref{lemma1}), we get the integration by part formula, so Lemma \ref{lemma2} is proved.
\end{proof}

Now we are ready to derive the Euler-Lagrange equation for the $L^2$-norm of the Chern torsion. Let $(M^n,g)$ be a compact Hermitian manifold, and let $h$ be a Hermitian-symmetric covariant tensor of type $(1,1)$. Using the metric $g$ to lift the anti-holomorphic part, $h^j_i= \sum_{\ell}h_{i\bar{\ell}}g^{\bar{\ell}j}$, we may view $h$ as a tensor in ${\mathcal T}_{\,1}^1$ as well. For small real values $t\in (-\varepsilon ,\varepsilon )$, consider Hermitian metric $g(t)=g+th$ on $M^n$, with Chern torsion $ T_{ik}^j(t)$. For convenience, let us denote by $\overset{\,\circ}{P}$ the quantity $\frac{d}{dt}P|_{t=0}$ for any $P$ depending on $t$. We have
$$ \overset{\circ}{g(t)} = h, \ \ \ \overset{\circ}{\big(g(t)^{-1}\big)} = - g^{-1}hg^{-1}, \ \ \ \overset{\circ}{\big(dv(t)\big)} = \overset{\circ} {\big( \log \det g(t) \big)} \,dv = \langle h,\bar{g}\rangle \,dv .$$
By (\ref{eq:T}), we get
\begin{equation*} \label{eq:Tdot}
 \overset{\circ}{T^j_{ik}} = \big( \partial_i(h_{k\bar{\ell}}) - \partial_k(h_{i\bar{\ell}}) \big) g^{\bar{\ell}j} - T^r_{ik}h_{r\bar{q}}g^{\bar{q}j} = \big( h_{k\bar{\ell},\,i}-h_{i\bar{\ell},\,k} \big) g^{\bar{\ell}j},
\end{equation*}
where indices after comma stand for covariant derivatives with respect to $\nabla$. Since
$$ |T|^2 = T^j_{ik} \overline{T^b_{ac}}\, g^{\bar{a}i} g^{\bar{c}k}g_{j\bar{b}}\,, $$
we get
\begin{equation*} \label{eq:Tsquaredot}
 \overset{\circ}{|T|^2} = 2\mbox{Re} \langle \overset{\circ}{T}  , \overline{T}\rangle + \langle h, \overline{B-2A}\rangle  = -4\mbox{Re} \langle \nabla'h  , \overline{T}\rangle + \langle h, \overline{B-2A}\rangle
\end{equation*}
Now we can apply Lemma \ref{lemma2}, and notice that for the torsion tensor $D=T$, the corresponding tensors $(D^1)_i^j=\phi_i^j$ and $(D^0)_i^j=\xi_i^j$, so we get the following
\begin{equation}
\int_M \overset{\circ}{|T|^2} dv = \int_M \langle h, \overline{P}\rangle dv, \ \ \ \ \ \mbox{where} \ \ \ P = B-2A -2(\phi + \phi^{\ast}) + 2(\xi + \xi^{\ast}).
\end{equation}
Apply this to the functional ${\mathcal F}=V^{\frac{1-n}{n}}\int |T|^2dv$, we get $\overset{\circ}{{\mathcal F}}= \int_M \langle h, \overline{Q}\rangle dv$, where
\begin{equation}
Q=P+(|T|^2+\frac{1-n}{n}b)g, \ \ \ \ \ \ \ b=\frac{1}{V}\int_M |T|^2dv.
\end{equation}
So  $g$ will be a critical point of ${\mathcal F}$ if and only if $Q=0$, namely, $-P=(|T|^2+\frac{1-n}{n}b)g$. Writing it equivalently in terms of $(1,1)$-forms, this means
$$ 2\sigma_1 - \sigma_2 + 2(\phi + \overline{\phi}) - 2(\xi + \overline{\xi}) = (|T|^2+\frac{1-n}{n}b)\omega. $$
Therefore we have completed the proof of Proposition \ref{prop2}.

\vspace{0.3cm}

\section{Existence and  non-existence of torsion-critical metrics}

K\"ahler metrics are certainly torsion-critical, namely, are critical points  of the $L^2$-norm ${\mathcal F}$ of Chern torsion, and by Gauduchon's theorem, when $n=2$ there are no other critical points. When $n\geq 3$, however, there are examples of non-K\"ahler metrics that are torsion-critical. By Proposition \ref{prop2},  product of torsion-critical metrics are still torsion-critical, so we just need to find a non-K\"ahler torsion-critical metric in dimension $3$, then such examples will exist in all dimensions $n\geq 3$.

Consider a compact Chern flat manifold $(M^3,g)$ whose universal cover is the complex Lie group
$G=SO(3,{\mathbb C})$, consisting of all $3\times 3$ complex matrices $X$ satisfying $^t\!X X=I$, equipped with a metric compatible with the complex structure. $(M^3,g)$ is a non-K\"ahler, compact Chern flat manifolds. Such manifolds are always balanced and with Chern parallel torsion (namely, $\nabla T=0$), so we have $\eta=0$, $\phi=0$, and $\xi =0$.

To compute $\sigma_1$ and $\sigma_2$, let us take the standard left-invariant coframe $\varphi$ on $G$, which are left-invariant $(1,0)$-forms satisfying
$$ d\varphi_1=\varphi_2\wedge \varphi_3, \ \ \ d\varphi_2=\varphi_3\wedge \varphi_1, \ \ \ d\varphi_3=\varphi_1\wedge \varphi_2.$$
Using $\varphi$ as unitary coframe, the metric has Chern torsion components
$$ T^1_{23}=T^2_{31}=T^3_{12}=-1,$$
while other components are zero. From these structure constants, we get
$$ A_{i\bar{j}}=B_{i\bar{j}}=2\delta_{ij}, \ \ \ \ \forall \ 1\leq i,j\leq 3. $$
So $\sigma_1=\sigma_2=2\omega$. Also, $|T|^2=b=6$ is a constant, so equation (\ref{eq:tor-crit}) is satisfied, and $(M^3,g)$ is torsion-critical.

Note that the same conclusion holds for  compact quotients of $SO(k,{\mathbb C})$ for any $k\geq 3$: all such manifolds are non-K\"ahler, balanced, and torsion-critical.

The example was actually found by restricting our attention to Strominger parallel manifolds, or STP in short, which means Hermitian metrics satisfying $\nabla^sT=0$. Here $T$ is the Chern torsion and $\nabla^s$ is the Strominger (or Bismut) connection. We have the following:

\begin{lemma} \label{lemma3}
Let $(M^n,g)$ be a STP manifold. Then under any unitary frame $e$ it holds
\begin{eqnarray}
 T^j_{ik,\ell} & = &  T^j_{rk} T^r_{i\ell } + T^j_{ir} T^r_{k\ell } -  T^r_{ik} T^j_{r\ell }, \label{eq:Tcomma}\\
  T^j_{ik,\bar{\ell}} & = & -T^j_{rk}\overline{T^i_{r\ell }} - T^j_{ir}\overline{T^k_{r\ell }}+ T^r_{ik}\overline{T^r_{j\ell }}, \label{eq:Tcommabar}\\
  0 \ \ & = & T^j_{rk} T^r_{i\ell } + T^j_{ir} T^r_{k\ell } -  T^r_{ik} T^j_{r\ell } \label{eq:Teta}
\end{eqnarray}
for any any $1\leq i,j,k,\ell \leq n$, where $r$ is summed from $1$ to $n$, and index after comma stand for covariant derivatives with respect to the Chern connection $\nabla$.
\end{lemma}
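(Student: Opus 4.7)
The plan is to exploit the standard explicit relation between the Strominger (Bismut) connection $\nabla^s$ and the Chern connection $\nabla$ on $T^{1,0}M$. In any local unitary frame $\{e_i\}$ with dual coframe $\{\varphi_i\}$, one has $\nabla^s=\nabla+\Psi$, where $\Psi$ is the $\mathfrak{u}(n)$-valued $1$-form with entries
\[
\Psi^j_i \;=\; \sum_\ell T^j_{i\ell}\,\varphi_\ell \;-\; \sum_\ell \overline{T^i_{j\ell}}\,\overline{\varphi}_\ell .
\]
The anti-Hermiticity $\Psi^j_i+\overline{\Psi^i_j}=0$ is immediate by inspection, and a direct check shows that the torsion of the resulting Hermitian connection $\nabla+\Psi$ is totally skew as a real $3$-tensor; by the uniqueness characterization of the Bismut connection this forces $\nabla+\Psi=\nabla^s$, so in particular $\Psi^j_i(e_\ell)=T^j_{i\ell}$ and $\Psi^j_i(\overline{e}_\ell)=-\overline{T^i_{j\ell}}$.

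Equations (\ref{eq:Tcomma}) and (\ref{eq:Tcommabar}) then drop out immediately from $\nabla^sT=0$. In components, $(\nabla^s_{e_\ell}T)(e_i,e_k)=0$ reads
\[
T^j_{ik,\,\ell} \;=\; \Psi^m_i(e_\ell)\,T^j_{mk} + \Psi^m_k(e_\ell)\,T^j_{im} - \Psi^j_m(e_\ell)\,T^m_{ik},
\]
and substituting $\Psi^j_i(e_\ell)=T^j_{i\ell}$ yields (\ref{eq:Tcomma}) after trivial relabeling; the analogous expansion of $(\nabla^s_{\overline{e}_\ell}T)(e_i,e_k)=0$ with $\Psi^j_i(\overline{e}_\ell)=-\overline{T^i_{j\ell}}$ produces (\ref{eq:Tcommabar}).

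For the third identity, the plan is to invoke the first Bianchi identity for the Chern connection. Because Chern curvature is of pure type $(1,1)$, its $(3,0)$-component collapses to the purely algebraic relation
\[
\sum_{\mathrm{cyc}(i,k,\ell)}\bigl(T^j_{k\ell,\,i} + T^r_{ik}\,T^j_{r\ell}\bigr) \;=\; 0,
\]
valid on every Hermitian manifold. Substituting (\ref{eq:Tcomma}) for each of the three covariant-derivative terms produces nine quadratic contributions; three of them cancel the three cyclic $T\!\cdot\!T$ terms exactly, and applying the antisymmetry $T^r_{ab}=-T^r_{ba}$ to the surviving six collapses them to twice the left-hand side of (\ref{eq:Teta}), delivering the identity.

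The only real obstacle is bookkeeping. Once the formula for $\Psi$ is pinned down correctly, (\ref{eq:Tcomma}) and (\ref{eq:Tcommabar}) are essentially mechanical, and the most delicate step is the careful tracking of indices, signs, and the cyclic sum in the Bianchi-based derivation of (\ref{eq:Teta}). An alternative route to (\ref{eq:Teta}) via commuting Chern covariant derivatives and a Ricci identity would also work, but appears less direct than extracting it from the $(3,0)$-Bianchi.
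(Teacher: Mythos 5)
Your derivation of (\ref{eq:Tcomma}) and (\ref{eq:Tcommabar}) is exactly the paper's: both read off $\nabla^s=\nabla+\Psi$ with $\Psi^j_i(e_\ell)=T^j_{i\ell}$, $\Psi^j_i(\overline{e}_\ell)=-\overline{T^i_{j\ell}}$ (the paper simply cites this formula from \cite{ZZ}, \cite{YZZ} rather than re-verifying the total skew-symmetry), and then expand $\nabla^sT=0$ in the $(1,0)$ and $(0,1)$ directions. For (\ref{eq:Teta}) you genuinely diverge: the paper invokes formula (17) of \cite[Lemma 2]{ZZ}, which expresses a cyclic combination of torsion derivatives through the \emph{Riemannian} curvature tensor, kills the curvature by the Riemannian first Bianchi identity, and then compares with (\ref{eq:Tcomma}); you instead use the first Bianchi identity of the \emph{Chern} connection itself, whose $(3,0)$-part is purely torsional because the Chern curvature has type $(1,1)$, namely $\sum_{\mathrm{cyc}(i,k,\ell)}(T^j_{k\ell,\,i}+T^r_{ik}T^j_{r\ell})=0$, and substitute (\ref{eq:Tcomma}) into it. I checked the bookkeeping: after substitution the three $T^r_{ik}T^j_{r\ell}$-type terms cancel against the three negative terms coming from the derivatives, and the remaining six terms collapse by antisymmetry to twice the right-hand side of (\ref{eq:Teta}), so the identity follows. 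Your route is more self-contained (it never leaves the Chern connection and needs no external curvature identity), at the cost of having to quote and correctly sign the torsionful first Bianchi identity; the paper's route outsources that computation to \cite{ZZ}. Both arguments are sound.
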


\begin{proof}
Let $e$ be a local unitary frame, with dual coframe $\varphi$. As is well-known, the difference between $\nabla^s$ and $\nabla$ are given by
\begin{equation*}
(\nabla^s-\nabla )e_i = \sum_{j=1}^n \big( \sum_{r=1}^n\{ T^j_{ir} \varphi_r - \overline{T^i_{jr}} \overline{\varphi}_r  \}  \big)e_j.
\end{equation*}
See for example \cite{ZZ} or  \cite{YZZ}, and note that the $T^j_{ik}$ there are half of ours here. So if $\nabla^sT=0$, then by the above formula we immediately get formula (\ref{eq:Tcomma}) and (\ref{eq:Tcommabar}). To prove the last identity (\ref{eq:Teta}), apply formula (17) in \cite[Lemma 2]{ZZ}, where $R$ stands for the Riemannian curvature tensor. If we take the sum of that identity by cyclicly permuting the three indices without bar, the curvature part disappear by the first Bianchi identity, hence we get a formula involving only the derivative of $T$ and the quadratic terms in $T$. Combining this identity with (\ref{eq:Tcomma}), we know that both sides of (\ref{eq:Tcomma}) must be zero. This completes the proof of the lemma.
\end{proof}

Now we are ready to prove Proposition \ref{prop3} stated in the introduction part.

\begin{proof}[{\bf Proof of Proposition \ref{prop3}}]
Let $(M^n,g)$ be a STP manifold, namely, it satisfies $\nabla^sT=0$. In (\ref{eq:Tcommabar}) above, take $k=\ell$ and sum it up from $1$ to $n$, we get
$$ \xi^j_i = - B_{i\bar{j}} + \phi^j_i + A_{i\bar{j}}.$$
 Therefore $\phi - \xi = B - A$ for any STP manifold, and the left hand side of equation (\ref{eq:tor-crit}) becomes $\sigma_2$. This shows that any SPT metric will be torsion-critical if and only if $\sigma_2 = c\, \omega$ for some constant $c$.

 It remains to show that any SPT torsion-critical metric must be balanced. Assume that it is not, then we can fix a point in $M$ and a unitary frame $e$ such that $\eta_1=\cdots =\eta_{n-1}=0$ but $\eta_n\neq 0$ at the given point. On the other hand, by letting $j=\ell$ in (\ref{eq:Teta}) and sum it up, we know that any SPT manifold always satisfies
 $$ \sum_r \eta_r T^r_{ik}=0$$
 for any $1\leq i,k\leq n$. With the above choice of unitary frame $e$, we have $T^n_{ik}=0$ for any $i,k$. This means that $B_{n\overline{n}}=0$. In other words, the $B$ tensor of a non-balanced SPT manifold can never be positive definite. So when $g$ is assumed to be torsion-critical, then $B_{n\overline{n}}=0$ leads to $c=\frac{1}{n}|T|^2=0$, hence $T=0$ and the metric is K\"ahler, contradicting with the assumption that $g$ is not balanced. This completes the proof of Proposition \ref{prop3}.
\end{proof}

Finally, let us prove the non-existence result, Proposition \ref{prop4}, which searches for torsion-critical metrics amongst several special classes of Hermitian metrics.

\begin{proof}[{\bf Proof of Proposition \ref{prop4}}]
Let $(M^n,g)$ be a torsion-critical manifold. We want to show that $g$ must be K\"ahler if it belongs to one of the special classes listed in the proposition. First assume that $g$ is locally conformally K\"ahler. As is well-known, the torsion tensor of such a metric is determined by its torsion $1$-form, namely, under any unitary frame $e$ we have
$$ T^j_{ik} = \frac{1}{n-1}\big( \delta_{ij}\eta_k - \delta_{kj}\eta_i \big) $$
for any $1\leq i,j,k\leq n$. From this, we compute
\begin{eqnarray*}
&& A_{i\bar{j}} \, = \, \frac{1}{(n-1)^2} \big( \delta_{ij} |\eta|^2 + (n-2)\eta_i \overline{\eta}_j\big) , \ \ \ \ \ B_{i\bar{j}} \, = \, \frac{2}{(n-1)^2} \big( \delta_{ij} |\eta|^2 - \eta_i \overline{\eta}_j\big), \\
&& \phi_i^j \, = \, \frac{1}{n-1} \big( \delta_{ij} |\eta|^2 -\eta_i \overline{\eta}_j\big), \ \ \ \ \ \ \ \xi_i^j \, = \, \frac{1}{n-1} \big( \delta_{ij} \chi -\eta_{i,\,\bar{j}} \big), \ \ \ \ \  |T|^2\,  = \,\frac{2}{n-1}  |\eta|^2 .
\end{eqnarray*}
Since $g$ is assumed to be torsion-critical, by plugging the above into equation (\ref{eq:tor-crit}) we get
$$ \eta_{i,\bar{j}} + \overline{\eta_{j,\bar{i}}} - \eta_i\overline{\eta}_j = \frac{n-1}{2n}b \,\delta_{ij}, $$
or equivalently, equation (\ref{eq:Gau-crit}). So by Proposition \ref{prop1}, we know that $\eta=0$, hence $T=0$.

Next, let us assume that $g$ is Strominger K\"ahler-like, namely, the curvature tensor of $\nabla^s$ obeys all the K\"ahler symmetries. By \cite{ZZ}, we know that $\nabla^sT=0$, so $g$ is SPT. By Proposition \ref{prop3}, we know that $g$ must be balanced. By \cite{ZZ} again, balanced Strominger K\"ahler-like metrics are always K\"ahler, so we have proved that $g$ must be K\"ahler in this case.

Now let us assume that $(M^n,g)$ is a complex nilmanifold with nilpotent $J$ in the sense of \cite{CFGU}. By the beautiful result of Salamon \cite{Salamon}, there exists unitary left-invariant coframe $\varphi$ such that
\begin{equation}
C^j_{ik}=D_{jk}^i =0 \ \ \ \ \ \mbox{unless} \ j> i,k. \label{eq:Jnil}
\end{equation}
where $C^j_{ik}$ and $D^j_{ik}$ are structural constants of the nilpotent Lie algebra determined by
$$ d\varphi_j =  - \frac{1}{2} \sum_{i,k=1}^n C^j_{ik} \varphi_i \wedge \varphi_k - \sum_{i,k=1}^n \overline{D^i_{jk}} \, \varphi_i \wedge \overline{\varphi}_k .
$$
By \cite{VYZ} or \cite{ZZ1}, we have
\begin{eqnarray*}
T_{ik}^j & = & -C_{ik}^j  - D_{ik}^j + D_{ki}^j   \\
   T^j_{ik,\overline{\ell }}  & = &  \sum_{r=1}^n \big( T^j_{rk}\overline{\Gamma^i_{r\ell}} + T^j_{ir}  \overline{\Gamma^k_{r\ell}} - T^r_{ik} \overline{\Gamma^r_{j\ell}} \big),
\end{eqnarray*}
where $\Gamma_{ik}^j  =  D^j_{ik}$ is the coefficients of the Chern connection $\nabla$. Note that our $T^j_{ik}$ equals twice of that in \cite{VYZ} or \cite{ZZ1}. From the above, we get $\eta_i = \sum_s D^s_{is}$, and
$$ \xi_i^j = \sum_{r,s} \{ T^j_{rs} \overline{D^i_{rs}} - T^r_{is} \overline{D^r_{is}} \} + \phi_i^j. $$
Using the conditions (\ref{eq:Jnil}), we get
$$ \phi_n^n - \xi_n^n = - \sum_{r,s} |D^r_{ns}|^2, \ \ \ \ A_{n\bar{n}} = \sum_{r,s} |D^r_{ns}|^2, \ \ \  \ B_{n\bar{n}} = \sum_{r,s} |C^n_{rs}|^2. $$
Plug them into equation (\ref{eq:tor-crit}), we get
$$ -\sum_{r,s} \{ 2|D^r_{ns}|^2+|C^n_{rs}|^2 \} = \frac{1}{n}|T|^2 . $$
Hence $|T|^2=0$ and $g$ is K\"ahler.

Finally, let $(M^3,g)$ be any manifold listed in Table 1 or Table 2 of \cite{AOUV}, which are all the complex nilmanifolds and Calabi-Yau type complex solvmanifolds in dimension $3$. For the nilpotent ones in Table 1, we only need to check the type (Niii) since others are with nilpotent $J$. By direct verification, one could see that $g$ cannot be torsion-critical, unless it is already K\"ahler. We will omit the details here since it is a straight-forward computation. This completes the proof of Proposition \ref{prop4}.
\end{proof}

\vs

\noindent\textbf{Acknowledgments.} The second named author would like to thank Haojie Chen, Ping Li, Lei Ni, Xiaolan Nie, Kai Tang, Hongwei Xu, Bo Yang, Xiaokui Yang, and Quanting Zhao for their interests and/or helpful discussions.

\vs

\end{document}